\newtheorem{theorem}{\noindent Theorem}
\newtheorem{lemma}{\noindent Lemma}
\newtheorem{corollary}{\noindent Corollary}
\newtheorem{statement}{\noindent Proposition}
\newtheorem{definition-theorem}{\noindent Definition-Theorem}
\title{Cohomology in nonunitary representations of semisimple Lie groups (the group $U(2,2)$)}
\author{A.~M.~Vershik\thanks{St.~Petersburg Department of Steklov Institute of Mathematics and
St.~Petersburg State University, St.~Petersburg, Russia. Email: {\tt vershik@pdmi.ras.ru}. Supported by the RFBR grants
11-01-00677-a and 13-01-12422-ofi-m.}
\and
M.~I.~Graev\thanks{Institute for System Studies, Moscow, Russia. Email: {\tt graev\_36@mtu-net.ru}.
Supported by the RFBR grant 13-01-00190а.}}
\date{}
\begin{document}

\maketitle
\rightline{\textit{To the 100th birthday of our teacher Israel Moiseevich Gelfand}}

\abstract{We suggest a method of constructing special nonunitary representations of semisimple Lie groups using representations of Iwasawa subgroups. As a typical example, we study the group
 $U(2,2)$.}

\section{Introduction: a survey of the theory of special representations}

\subsection{Groups of currents and special representations}

The group of currents
$G^X$, where $X$ is a topological space equipped with a Borel probability measure $m$ and $G$ is an arbitrary locally compact group, is the group of continuous maps  $X\rightarrow G$ with pointwise multiplication and with some integrability condition with respect to $m$. The study of representations of such groups is inspired both by representation theory itself and applications to mathematical physics. A well-known model of irreducible representations of current groups $G^X$  is the Fock, or Gaussian, model, in which a crucial role is played by nonzero first cohomology of the group of coefficients $(G$) with values in irreducible unitary representations of this group. Such an irreducible representation $T$ of the group $G$ in a Hilbert space $H$ has a nontrivial 1-cocycle, i.e., a continuous map $b:G\rightarrow H$ satisfying the condition
$$ b(g_1g_2)=T(g_1)b(g_2)+b(g_1) \quad \mbox{for any} \quad g_1, g_2 \in G $$
and the following nontriviality condition: there is no vector
$\xi\in H$ such that $b(g)=T(g)\xi-\xi$ for every $g\in G$.
Such representations are called {\it special}. The trivial representation is special for all groups having nontrivial complex characters, since an additive complex character is exactly a nontrivial 1-cocycle with values in the one-dimensional trivial complex representation.

In other terms, a special representation is a representation in which there exist
{\it almost invariant vectors}; the latter term means, for a representation of a group $G$ in a space $H$, that for every
$\epsilon >0$ and every compact subset $K$ in $G$, there exists a vector $h$ in $H$ such that
$\|U_k h - h\|<\epsilon$ for every $k \in K$, where $U_k$ is the operator in the representation corresponding to the element $k$.

By a theorem from \cite{VK}, every special representation of a compactly generated group is not Hausdorff separated from the trivial representation in the Fell topology. The Fell topology on the space of unitary representations of a locally compact group $G$ is defined as follows: an open neighborhood of a representation $\pi$ in a Hilbert space $H$ is determined by a number
$\epsilon$, a finite collection $h_1,\dots h_k$ of elements of $H$, and a compact subset $K$ in $G$; it consists of all unitary representations
$\rho$ of $G$ such that the space of $\rho$ contains elements
$f_1,\dots, f_k$ such that
$$\sup_{i=1,\dots, k; \;g \in K}\left\{|\langle\pi(g)h_i,h_i\rangle -\langle\rho(g)f_i,f_i\rangle|<\epsilon\right\}.$$
The converse is not true: a representation that is not separated from the trivial representation is not necessarily special, i.e., does not necessarily have non vanishing first cohomology.

In \cite{Sh}, Y.~Shalom proved the conjecture stated in
\cite{VK}: if for a locally compact group the trivial representation is not isolated in the space of all irreducible unitary representations (i.e., the group does not have Kazhdan's  property (T)), then it has at least one special representation. The proof of the main theorem in \cite{Sh} (see also \cite[Theorem 3.2.1]{Harp})  is not constructive, and hence does not provide a direct method of finding a special representation: we know very little about how to select special representations from the set of all representations that are not separated from (or ``glued'' to) the trivial representation.

We use the following terminology. The
{\it core} of a given representation $\pi$ is the set of all representations
$\rho$ such that arbitrary open neighborhoods of
$\pi$ and $\rho$ have a nonempty intersection. The core of the regular representation of an amenable group contains all irreducible representations, and this is a characteristic property of amenability. This notion is of special importance for irreducible representations and, in particular, for the trivial representation. Unless otherwise stated, by the core of a group we mean the core of its one-dimensional trivial representation.\footnote{A more detailed terminology is as follows. Representations lying in the core of the trivial representation are called {\it infinitesimal} (in a Leibniz-like sense, see
\cite{VK});  the {\it subcore}
of a given representation $\pi$ is the set of all representations
$\rho$ whose closure contains $\pi$ (correspondingly, the subcore of a group is the subcore of its trivial representation).
In other terms, this means that $\rho$ weakly contains $\pi$. The subcore is, obviously, a subset of the core. The interpretation of these notions in terms of {\it Hausdorff's separation axioms} is as follows: if the trivial representation lies in the core of $\rho$, then $T_{2}$ does not hold for these two elements; if $\rho$ lies in
the subcore, even $T_1$ does not hold;  $T_0$ always holds, since the trivial representation is closed.}

One says that  nonvanishing cohomology with values in a representation is reduced
 (see \cite{Sh}) if the corresponding cocycle is not a limit of trivial cocycles. This can be the case only for representations lying in the core, but not in the subcore.
For the semisimple groups $O(n,1), U(n,1)$, the non vanishing cohomology is reduced. The authors do not know whether there exist solvable groups satisfying this condition.
Finally, for completeness we mention another interesting notion --- that of groups with the Haagerup property (see \cite{Harp, Ha}): these are groups for which the trivial representation is not isolated (i.e., which do not have Kazhdan's property) but cocycles with values in a special representation satisfy a certain nondegeneracy  condition:  regarded as a map $$\beta:G\rightarrow H$$ from the group to a Hilbert space, the cocycle is proper, i.e., the preimages of bounded sets in $H$ are precompact in the group. This property holds  for all amenable groups, free groups (\cite{Ha}), the groups $O(n,1), U(n,1)$, and others (see \cite{Va}). Examples of groups that satisfy neither Kazhdan's property nor the Haagerup property are not yet sufficiently studied.

\subsection{Special representations of rank~$1$ groups and their solvable subgroups}
In the most important class of Lie groups --- that of semisimple groups --- only the groups $O(n,1)$ and $U(n,1)$, which are of rank~$1$, have special representations. The other semisimple groups (including, as proved by Kostant \cite{Ko}, even rank~$1$ groups $Sp(n,1)$) have Kazhdan's property~(T): their trivial representations are isolated, and the Fock model of constructing representations of the corresponding groups of currents does not apply. Note that Kostant's theorem on
$Sp(n,1)$ still has no geometric proof.

One may say that the analysis of representations of the groups of currents for
$O(n,1)$ and $U(n,1)$ is developed quite well. Studies in this direction began from the pioneer work by I.~M.~Gelfand and the authors of this paper, see
\cite{VGG1, VGG2, VGG3}. The general scheme of the Fock model, regardless the concrete group, was earlier considered by Araki
\cite{Ar} (see \cite{Par}), however, before the paper
 \cite{VGG1} there were no examples of semisimple groups for which the core is nontrivial.
 In these papers, as well as in \cite{Ism, Del, Gui, Ber}, irreducibility conditions for representations of current groups were found, and other properties of these groups were established. The key role was played by the ideas of  the paper \cite{VGG3}, in which a method was suggested, for semisimple groups of rank~$1$, of reducing the problem to a solvable subgroup, on the example of $SL(2,R)$. The elaboration of this idea by the authors of this paper during the last 10 years has led to new models of representations of current groups which are equivalent to the Fock one but are constructed from other (non-Gaussian) L\'evy measures (see \cite{VG1, VG2}). This has led to constructing the integral, and then Poisson and quasi-Poisson, models of representations of current groups (\cite{VG3, VG4}). There has also appeared the so-called infinite-dimensional Lebesgue measure \cite{Leb, V1, V2}, which is the most precise continual analog of the Lebesgue measure on a finite-dimensional positive octant. This measure is closely related to the gamma process and is of great interest in itself.

\subsection{Solvable subgroups of semisimple groups, and a refinement of the Iwasawa decomposition}
It is well known that for commutative and nilpotent groups the special representations are exhausted by the trivial representation (see \cite{Gui, Del}).  But even for solvable groups, this problem is not sufficiently studied. We are interested in a concrete class of solvable groups, which are subgroups of
$O(p,q),U(p,q)$, and our first example is the group  $U(2,2)$.
We are working with a solvable subgroup of $U(2,2)$ which should be called the {\it Iwasawa group}. Such a subgroup can be defined in an arbitrary connected semisimple real Lie group (see \cite{VO}, where it was called the ``maximal connected triangular subgroup''). The Iwasawa decomposition means that this subgroup is ``complementary'' to the maximal compact subgroup.

A solvable subgroup (like any amenable group) has a special representation, so that we obtain the following strategy of constructing a representation of the group of currents of a semisimple group $G$: first find a special representation, unitary or not, of this solvable subgroup and construct a representation of the group of its currents, and then try to extend the special representation to the whole semisimple group $G$ and construct an extension of the representation to the group of currents of $G$. As mentioned above, for groups of rank~$1$, this trick was first used in \cite{VGG3} in the case of
$SL(2,R)$, and then studied in detail in a recent series of papers of the authors (\cite{VG3, VG4}).  When passing from groups of rank~$1$ to higher ranks, this idea is still working.

Here we consider this problem on the concrete example of the group
 $U(2,2)$, keeping in mind the more general situation, which will be considered elsewhere. We describe in detail the Iwasawa subgroup (denoted by $P$ in what follows) for this case. The first question is, what are its special representations?
But here we encounter a new problem: for our plan to be viable, this special representation of the Iwasawa subgroup must be faithful.\footnote{A faithful (or nondegenerate) representation is a representation whose kernel is trivial.} The authors do not know for what groups this is the case. For example, nilpotent groups have no faithful special representations. This fact is of interest already for the Heisenberg group, and it is equivalent to a version of the uncertainty principle.\footnote{From discussions with V.~P.~Khavin, the first author inferred that this fact about the Heisenberg group apparently still has no purely analytical (rather than representation theoretic) proof.} For the affine group
 $\mbox{Aff}(\Bbb R)$, the infinite-dimensional representation (which is quasi-equivalent to the regular one) is a faithful special representation. But even for simplest three-dimensional solvable groups (in particular, for the group $S$, see below), the question is not trivial.

 \medskip
\noindent \textbf{Problem.}  What groups (in particular, what solvable groups) have a faithful irreducible unitary special representation? More precisely: when does such a representation exist for the Iwasawa subgroup of a semisimple real Lie group?

\medskip
Thus the main difficulty is to construct a faithful special representation, unitary or not, of the Iwasawa subgroup. {\it A construction of a nonunitary faithful special representation of the group $U(2,2)$ is the main result of this paper}, and the authors have no doubts that such a construction can be carried out for every real semisimple group. This makes it possible to extend the cocycle to the whole semisimple group and construct a representation of the group of currents, since, as follows from the results of the above-mentioned papers, there are models (for instance, the Poisson model, unlike the Fock one) of representations of groups of currents that do not  in any way rely on the unitarity of the original representation. The question of whether a cocycle can be constructed with values in a unitary representation of the Iwasawa subgroup is yet open.

In conclusion of this survey, we mention a somewhat different approach, which is closer to the original work on groups of currents and classical work on the representation theory of semisimple groups. Namely, it is well known that special representations of semisimple groups of rank~$1$  lie in the ``tail'' of the complementary series, and complementary series exist for every semisimple group. However, for groups of rank greater than~1,  unitary representations do not ``reach'' the trivial representation; more exactly, unitarity gets lost under deformations of the trivial representation. On the other hand, it is known that in some cases one can find an indefinite bilinear form invariant under the action of the group in such a nonunitary representation. The properties of the corresponding space with an indefinite metric are poorly studied, and the problem of the existence of a special representation, perhaps nonunitary, apparently has not been stated.
Our alternative strategy relies on the analytic continuation of unitary representations of the Iwasawa subgroups and, in particular, construction of a nonunitary special representation within this framework.
It is conceivable that these two approaches to the representation theory of the groups of currents for semisimple groups of rank greater than~1 may lead to different classes of representations.

\section{The group $U(2,2)$ and its Iwasawa subgroup}

\subsection{Description of the group $U(2,2)$}
As a first example, we consider the group
$U(2,2)$ of linear transformations of ${\Bbb C}^4$ preserving a fixed Hermitian form with signature $(2,2)$; here we choose the Hermitian form
 $$x_1\bar x_3+\bar x_1 x_3 +
 x_2\bar x_4+ \bar x_2 x_4.$$
The group $U(2,2)$ is one of the simplest examples of a semisimple Lie group whose real rank is greater than one (it is equal to 2). The groups of the form $U(p,q)$ are called pseudo-unitary. In this section, we refine the Iwasawa decomposition for this group and study the structure of the key object, the solvable Iwasawa subgroup $P$.

We will write elements of the group  $U(2,2)$ as $2\times 2$ block  matrices with $2\times 2$ blocks:
$$\left(
  \begin{array}{cc}
    g_{11} & g_{12}\\
    g_{21} & g_{22}\\
 \end{array}
\right),
$$
where $g_{ij}$ are complex $2\times 2$ matrices satisfying the relation
$$
g \sigma g^*= \sigma, \qquad \mbox{with} \quad \sigma =
\left(
  \begin{array}{cc}
    0 & e_2 \\
    e_2 & 0 \\
  \end{array}
\right).
$$
Here $e_2$ is the $2\times 2$ identity matrix and
$*$ stands for the conjugate transpose in the complex case, and for the transpose in the real case.

These relations are equivalent to the following relations between the blocks of the matrix $g$:
\begin{eqnarray*}
g_{12}g^*_{21}+g_{11}g^*_{22}&=&e_2,\\
g_{11}g^*_{12}+g_{12}g^*_{11}&=&0,\\
g_{22}g^*_{21}+g_{21}g^*_{22}&=&0.
\end{eqnarray*}

The real dimension of the group $U(2,2)$ is equal to 16. In what follows, the key role is played by the {\it solvable subgroup $P$ of $U(2,2)$ generated by the following two subgroups
$N$ and $S$}:

\begin{itemize}
\item the additive (commutative) group $N$ of skew-Hermitian block matrices of the form
$$
\left(
  \begin{array}{cc}
  e_2 & 0\\
    n & e_2 \\
  \end{array}
\right),
$$
where $n$ is a skew-Hermitian $2\times2$ matrix: $n+n^*=0$;

\item the solvable subgroup $S$ (of derived length 2) of block matrices of the form
$$
\left(
  \begin{array}{cc}
  s^{*{-1}} & 0\\
    0 & s \\
  \end{array}
\right),
$$
where
$s$ is a lower triangular complex matrix with positive diagonal entries:
$$
\left(
  \begin{array}{cc}
    r_1 & 0 \\
    r & r_2 \\
  \end{array}
\right),\qquad r_1, r_2>0, \;r \in \Bbb C.
$$
\end{itemize}

The real dimension of the groups $S$ and $N$ is equal to  4, and that of the group $P$ is equal to 8.

A general element of the group $P$ (a pair $(s,X)$) is as follows:
 $$\left(
     \begin{array}{cc}
      s^{*{-1}}&0 \\
       X & s \\
     \end{array}
   \right),
$$
where $X$ is a $2\times2$ matrix satisfying a condition that could be called the {\it relative skew-Hermiticity} (with respect to the matrix $s$):
$$sX^*+Xs^*=0.$$

The following assertion can be  checked directly.

\begin{statement} The group $U(2,2)$ is algebraically generated by the elements of the group $P$ and the involution $\sigma$; the intersection of the groups $N$ and $S$ consists of the identity element.
\end{statement}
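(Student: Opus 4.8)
The second assertion I would dispose of first, since it is immediate. An element of $N\cap S$ must simultaneously have the block form of an element of $N$ and of an element of $S$; comparing the $(1,1)$ blocks forces $s^{*-1}=e_2$, i.e.\ $s=e_2$, and then comparing the $(2,1)$ blocks forces $n=0$. Hence the only common element is the identity, so $N\cap S=\{e\}$.

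For the generation statement the plan is to pass to the opposite of $P$ and then exploit connectedness of $U(2,2)$. First I would record the effect of conjugation by $\sigma$: a direct block computation gives
$$\sigma\left(\begin{array}{cc} s^{*-1}&0\\ X&s\end{array}\right)\sigma^{-1}=\left(\begin{array}{cc} s&X\\ 0&s^{*-1}\end{array}\right),$$
so that $\bar P:=\sigma P\sigma^{-1}$ is precisely the group of block upper-triangular matrices in $U(2,2)$, the parabolic opposite to $P$. Consequently the subgroup $H:=\langle P,\sigma\rangle$ contains $P$, $\bar P$ and $\sigma$; equivalently it contains the lower unipotent group $N$, the opposite (upper) unipotent group $\bar N=\sigma N\sigma^{-1}$, and the triangular diagonal group $S$.

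I would then argue infinitesimally. Since $U(2,2)$ is connected, it suffices to show that $H$ contains a neighbourhood of the identity, and for this it is enough that the Lie subalgebra of $\mathfrak{u}(2,2)$ generated by $\operatorname{Lie}(P)$ and $\operatorname{Ad}(\sigma)\operatorname{Lie}(P)$ be all of $\mathfrak{u}(2,2)$. Writing $\mathfrak{u}(2,2)=\bar{\mathfrak n}\oplus\mathfrak m\oplus\mathfrak a\oplus\mathfrak n$ for the restricted root space decomposition, one has $\operatorname{Lie}(P)=\mathfrak a\oplus\mathfrak n$ and $\operatorname{Ad}(\sigma)\operatorname{Lie}(P)=\mathfrak a\oplus\bar{\mathfrak n}$, so their sum exhausts everything except the two-dimensional compact piece $\mathfrak m$ (the diagonal ``phases'', which are absent from $P$ because there the diagonal of $s$ is required to be positive). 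These missing directions must be produced as brackets: I would exhibit explicit skew-Hermitian $2\times2$ matrices $m,n$ for which the block-diagonal commutator $\left[\left(\begin{smallmatrix}0&0\\ n&0\end{smallmatrix}\right),\left(\begin{smallmatrix}0&m\\ 0&0\end{smallmatrix}\right)\right]=\left(\begin{smallmatrix}-mn&0\\ 0&nm\end{smallmatrix}\right)$ has diagonal entries with nonzero imaginary part, so that $[\mathfrak n,\bar{\mathfrak n}]$ projects nontrivially onto $\mathfrak m$; since $\mathfrak m$ is only two-dimensional, finitely many such choices span it.

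Once $\mathfrak m$ is recovered, the generated Lie subalgebra is all of $\mathfrak{u}(2,2)$. The subgroup of $U(2,2)$ generated by the connected subgroups $P$ and $\bar P$ is then a connected Lie subgroup whose Lie algebra is this full subalgebra; being full-dimensional it is open, and an open subgroup of the connected group $U(2,2)$ is the whole group. Therefore $H=U(2,2)$. The only step with genuine content is the recovery of $\mathfrak m$ from $[\mathfrak n,\bar{\mathfrak n}]$, that is, the fact that the compact phase directions missing from the triangular group $P$ reappear as commutators of the two opposite unipotent groups $N$ and $\bar N=\sigma N\sigma^{-1}$. This is the exact analogue, for $U(2,2)$, of the elementary fact that the lower and upper unipotent subgroups already generate $SL(2,\mathbb{R})$, and it is what makes the single involution $\sigma$ (rather than a full set of Weyl representatives) suffice.
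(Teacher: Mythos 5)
Your treatment of $N\cap S=\{e\}$ and the conjugation formula $\sigma\left(\begin{smallmatrix}s^{*-1}&0\\ X&s\end{smallmatrix}\right)\sigma^{-1}=\left(\begin{smallmatrix}s&X\\ 0&s^{*-1}\end{smallmatrix}\right)$ are correct, and your scheme (generate the Lie algebra, then conclude by openness and connectedness) is the natural one. The fatal step is the recovery of $\mathfrak m$, and no choice of matrices can repair it. Every generator you allow yourself is traceless: an element of $\operatorname{Lie}(P)$ has block form $\left(\begin{smallmatrix}-h^*&0\\ Y&h\end{smallmatrix}\right)$ with $\operatorname{tr}h$ real, hence trace $0$; $\operatorname{Ad}(\sigma)$ preserves traces; and commutators are always traceless. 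So the Lie algebra generated by $\operatorname{Lie}(P)$ and $\operatorname{Ad}(\sigma)\operatorname{Lie}(P)$ lies in $\mathfrak{su}(2,2)$, which has dimension $15$, not $16$. Concretely, $\mathfrak m=\{\operatorname{diag}(i\theta_1,i\theta_2,i\theta_1,i\theta_2)\}$ contains the center $i\mathbb{R}\cdot\mathrm{Id}$ of $\mathfrak{u}(2,2)$, and since the $\mathfrak a$-components and root-space components of a traceless element are themselves traceless, the $\mathfrak m$-component of anything you generate lies in the line $\theta_1+\theta_2=0$. Your brackets do reach that line (take $n=\left(\begin{smallmatrix}0&1\\ -1&0\end{smallmatrix}\right)$, $m=\left(\begin{smallmatrix}0&i\\ i&0\end{smallmatrix}\right)$, which give $\operatorname{diag}(i,-i,i,-i)$), so the generated algebra is exactly $\mathfrak{su}(2,2)$; but the claim that finitely many such choices span the two-dimensional $\mathfrak m$ is false, and the full-dimension/openness conclusion collapses.

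The obstruction survives at the group level and shows the problem lies with the statement itself, not merely with your method: $\det p=\det(s)/\overline{\det(s)}=1$ for every $p\in P$ (since $\det s=r_1r_2>0$), and $\det\sigma=1$ ($\sigma$ is an even permutation matrix), so the group algebraically generated by $P$ and $\sigma$ is contained in $SU(2,2)$. This is a proper subgroup of the paper's $16$-dimensional $U(2,2)$, which contains all scalars $e^{i\theta}\mathrm{Id}$, of determinant $e^{4i\theta}$. Thus the Proposition, read literally, cannot be proved; the paper itself offers no argument (``can be checked directly''), and the correct statement is what your proof yields after the repair above: since $\mathfrak a\oplus\mathfrak n\oplus\bar{\mathfrak n}$ together with $\operatorname{diag}(i,-i,i,-i)$ already spans the $15$-dimensional algebra $\mathfrak{su}(2,2)$, and $\sigma\in SU(2,2)$, the group $\langle P,\sigma\rangle$ is precisely $SU(2,2)$ --- that is, $P$ and $\sigma$ generate $U(2,2)$ only modulo its central circle. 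A smaller inaccuracy in the same direction: $\sigma P\sigma^{-1}$ is not the full block upper-triangular subgroup of $U(2,2)$, because $P$ is only the $AN$ part of the minimal parabolic; the missing compact factor $M$ is exactly where the trouble sits.
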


The homogeneous space $U(2,2)/K$ where $K$ is the maximal compact subgroup of $U(2,2)$ is exactly the space of the subgroup $P$, which justifies calling it the Iwasawa subgroup. (This makes it possible to extend a cocycle from $P$ to the whole group $U(2,2)$, see below.)

Since the group $S$ acts in an obvious way on the additive group $N$ of skew-Hermitian matrices according to the rule
  $$ n\mapsto sns^*,\qquad  s\in S, \;n\in N,$$
we can define the semidirect product
$Q=S\rightthreetimes N$ of $S$ and $N$. One can directly check the following important assertion.

\begin{theorem}
The groups $P$ and $Q$ are canonically isomorphic. The isomorphism
 $I: P \rightarrow Q$ is given by the formula
$$I:(s,X)\rightarrow (s, Xs^*)$$
(the left-hand side is an element of the subgroup
$P \subset U(2,2)$, and the right-hand side is an element of the semidirect product
 $Q$, the matrix  $Xs^*$ being obviously skew-Hermitian). The inverse isomorphism is as follows: $(s,n)\rightarrow (s,{ns^*}^{-1}),$ where $n$ is skew-Hermitian.
\end{theorem}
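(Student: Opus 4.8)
The plan is to reduce everything to a direct computation of the two group laws in coordinates and then check that $I$ intertwines them. First I would compute the multiplication in $P$ by multiplying the defining block matrices: for $(s_1,X_1),(s_2,X_2)\in P$,
$$
\begin{pmatrix} s_1^{*-1} & 0 \\ X_1 & s_1 \end{pmatrix}
\begin{pmatrix} s_2^{*-1} & 0 \\ X_2 & s_2 \end{pmatrix}
=
\begin{pmatrix} (s_1 s_2)^{*-1} & 0 \\ X_1 s_2^{*-1}+s_1 X_2 & s_1 s_2 \end{pmatrix},
$$
where the top-left block uses $(s_1 s_2)^*=s_2^* s_1^*$. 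Hence in the coordinates $(s,X)$ the group law on $P$ reads $(s_1,X_1)\cdot(s_2,X_2)=(s_1 s_2,\,X_1 s_2^{*-1}+s_1 X_2)$. On the $Q$ side, the action $n\mapsto s n s^*$ gives the semidirect-product law $(s_1,n_1)\cdot(s_2,n_2)=(s_1 s_2,\,n_1+s_1 n_2 s_1^*)$.

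Next I would check that $I$ actually maps $P$ into $Q$: writing $n=Xs^*$, the relative skew-Hermiticity $sX^*+Xs^*=0$ gives $n^*=(Xs^*)^*=sX^*=-Xs^*=-n$, so $n\in N$. The heart of the argument is the homomorphism property. Applying $I$ to the product computed above, with $t=s_1 s_2$ and $Y=X_1 s_2^{*-1}+s_1 X_2$, I would expand
$$
Yt^*=\bigl(X_1 s_2^{*-1}+s_1 X_2\bigr)\,s_2^* s_1^*
= X_1 s_1^* + s_1\,(X_2 s_2^*)\,s_1^*,
$$
using $s_2^{*-1}s_2^*=e_2$. Setting $n_i=X_i s_i^*$, this is $Yt^*=n_1+s_1 n_2 s_1^*$, which is exactly the $N$-component of $(s_1,n_1)\cdot(s_2,n_2)$ in $Q$. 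Since the $S$-components plainly agree, this shows $I\bigl((s_1,X_1)(s_2,X_2)\bigr)=I(s_1,X_1)\,I(s_2,X_2)$.

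Finally, bijectivity is immediate: for fixed $s$ the map $X\mapsto Xs^*$ is a linear isomorphism of the relevant real vector spaces, with inverse $n\mapsto n s^{*-1}$, so $I$ is injective and surjective. To confirm that the proposed inverse lands in $P$, I would verify that $X=n s^{*-1}$ satisfies the relative skew-Hermiticity whenever $n^*=-n$: indeed $sX^*=s\,(s^{*-1})^* n^*=s s^{-1} n^*=n^*=-n$ and $Xs^*=n s^{*-1}s^*=n$, so $sX^*+Xs^*=0$. This already shows $I$ is a bijective homomorphism, hence an isomorphism. I do not expect a genuine obstacle here; the only real care is bookkeeping — fixing the semidirect-product convention consistently (letting $s_1$ act on the second factor by $n\mapsto s_1 n s_1^*$) and tracking the adjoints and inverses of $s$ correctly, since a convention mismatch is the one thing that would make $I$ look like an anti-homomorphism rather than a homomorphism.
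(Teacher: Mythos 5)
Your proof is correct and takes essentially the same route as the paper: the paper's entire proof is the single homomorphism computation $I((s_1,X_1)\circ(s_2,X_2)) = (s_1s_2,\,X_1s_1^*+s_1X_2s_2^*s_1^*) = I(s_1,X_1)I(s_2,X_2)$, which is exactly the heart of your argument, with the same semidirect-product convention $(s_1,n_1)(s_2,n_2)=(s_1s_2,\,n_1+s_1n_2s_1^*)$. Your extra verifications (that $Xs^*$ is skew-Hermitian, that the inverse map lands in $P$, and bijectivity) are details the paper dismisses as obvious, so there is no substantive difference.
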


\begin{proof}
We check that the multiplication is homomorphic:
$$I((s_1, X_1)\circ (s_2, X_2)) = I(s_1 s_2, X_1 {s_2^*}^{-1} + s_1X_2)=$$
$$=(s_1 s_2,X_1s_1^*+s_1X_2s_2^*s_1^*)=I(s_1,X_1)I(s_2,X_2).$$
 \end{proof}

The map $I$ sends $P$ to the group whose matrix realization consists of the collection of pairs
 $(s,X)$ satisfying the above condition. Hence the isomorphism of the semidirect product $Q$ of $S$ and $N$ with its matrix realization, i.e., the group $P$, does not coincide with the direct product of the identity isomorphism of the subgroup $S$  and the identity isomorphism of the normal subgroup $N$. Nevertheless, the group $P$ is the semidirect product $S\rightthreetimes N$.\footnote{In the case of a group of rank~$1$, the matrix realization of the Iwasawa subgroup is somewhat simpler; for example, for the group $SL(2)$ the Iwasawa subgroup  $P$ is the subgroup of triangular matrices
$$\left(
                              \begin{array}{cc}
                                s^{-1} & o \\
                                  n    & s \\
                              \end{array}
                            \right),\qquad s\in {\Bbb R}_+, \;n\in \Bbb R,$$
and the structure of the semidirect product agrees with the ordinary matrix representation, since the group regarded as a set is the direct product of the normal subgroup
$$\left(
                                                                               \begin{array}{cc}
                                                                                 1 & 0 \\
                                                                                 n & 1 \\
                                                                               \end{array}
                                                                             \right)$$
and the subgroup $$\left(
              \begin{array}{cc}
                s^{-1} & 0 \\
                0      & s \\
              \end{array}
            \right).$$}

A general element of the commutant of the group $S$ in the above notation is as follows:
$$
 \left(
  \begin{array}{cc}
   1 & 0 \\
   r & 1 \\
  \end{array}
  \right),
$$
where $r$ is a complex number. The derived length of the solvable group $S$ is equal to 2. Thus the commutant of the group $P$ is the semidirect product of
$\Bbb C$ and the commutative group of skew-Hermitian matrices (with a nontrivial action of $\Bbb C$).

\begin{corollary}
The derived length of the solvable group $P$ is equal to~$3$.
\end{corollary}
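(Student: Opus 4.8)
The plan is to read the derived length off the derived series $P=P^{(0)}\supseteq P^{(1)}\supseteq P^{(2)}\supseteq\cdots$, $P^{(i+1)}=[P^{(i)},P^{(i)}]$, using the description of the commutant obtained just above the corollary. Write $C=[S,S]$ for the commutant of $S$; as noted there, $C\cong\Bbb C$ is the abelian unipotent subgroup of matrices $\left(\begin{smallmatrix}1&0\\ r&1\end{smallmatrix}\right)$, and $P^{(1)}=[P,P]=C\rightthreetimes N$, with $C$ acting on the abelian group $N$ of skew-Hermitian matrices by $n\mapsto sns^{*}$. Since isomorphic groups have the same derived length, by the theorem above I may carry out all computations inside $Q=S\rightthreetimes N\cong P$. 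It then suffices to prove the two inequalities hidden in the statement, namely $P^{(2)}\neq\{e\}$ and $P^{(3)}=\{e\}$.

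For the upper bound I would use the projection $\pi\colon C\rightthreetimes N\to C$, $(s,n)\mapsto s$, a homomorphism with kernel $N$. Because $C$ is abelian, $\pi$ annihilates every commutator, so $P^{(2)}=[\,P^{(1)},P^{(1)}\,]\subseteq\ker\pi=N$. As $N$ is itself abelian, $P^{(3)}=[P^{(2)},P^{(2)}]=\{e\}$, so the derived length is at most $3$.

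For the lower bound I must certify $P^{(2)}\neq\{e\}$, i.e. that the induced action of $C$ on $N$ is genuinely nontrivial; this is the only step needing a computation and is where the modest content of the corollary lies. In $Q$ the commutator of $(s,0)$ and $(e,n)$ equals $(e,\,sns^{*}-n)$, so it is enough to produce a unipotent $s\in C$ and a skew-Hermitian $n$ with $sns^{*}\neq n$. Taking $s=\left(\begin{smallmatrix}1&0\\ r&1\end{smallmatrix}\right)$ with $r\neq0$ and $n=\mathrm{diag}(i,0)$ gives $sns^{*}=\left(\begin{smallmatrix}i& i\bar r\\ ir& i|r|^{2}\end{smallmatrix}\right)$, whose $(1,2)$ entry $i\bar r$ is nonzero; hence $sns^{*}-n\neq0$ is a nontrivial element of $N\cap P^{(2)}$. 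Therefore $P^{(2)}\neq\{e\}$ while $P^{(3)}=\{e\}$, and the derived length of $P$ equals exactly $3$, as claimed.
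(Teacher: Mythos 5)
Your proof is correct and takes essentially the same route as the paper: the corollary there is meant to follow directly from the immediately preceding observation that $[P,P]=\Bbb C\rightthreetimes N$ with a nontrivial action of $\Bbb C$ on $N$, which is exactly the fact you exploit. Your explicit commutator computation (the element $(e,\,sns^*-n)$ with nonzero $(1,2)$ entry $i\bar r$) merely fills in the nontriviality of that action, which the paper leaves implicit.
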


\subsection{Representations of the Iwasawa subgroup and almost invariant measures}

Our aim is to study the special representation of the subgroup $P$ and then extend it to the whole group $U(2,2)$. We will study representations of the group $P$ regarded as a semidirect product.

The group $N$ is isomorphic to the group $\hat N$ of all its continuous characters, and we have the conjugate action of the group $S$ on $\hat N$;
moreover, the direct and conjugate actions on the group $N$ (identified with its dual group) coincide:
$$
n\rightarrow sns^*; \qquad\chi\rightarrow \chi_s \quad \mbox{where}\quad   \chi_s(n)=\chi(sns^*).
$$

\begin{statement}
The action of the group $S$ by automorphisms on the group $\hat N$ (and on the group $N$) has four orbits of positive measure; they are parametrized by the signs of the imaginary parts of the diagonal entries, i.e., are the orbits of the elements
$$\left(
                                                                     \begin{array}{cc}
                                                                       \pm i & 0 \\
                                                                        0    & \pm i \\
                                                                     \end{array}
                                                                  \right),$$
 respectively.
On every orbit, the action of $S$ is free and faithful, so that each orbit can be identified with the group $S$; then the action coincides with the action of $S$ on itself by right translations.
\end{statement}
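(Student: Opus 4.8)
The plan is to make the $S$-action completely explicit in linear coordinates on $N$ and then read off the invariants and the whole orbit structure. First I would parametrize a skew-Hermitian matrix as $n=\left(\begin{smallmatrix} ia & -\bar c\\ c & ib\end{smallmatrix}\right)$ with $a,b\in\Bbb R$, $c\in\Bbb C$, so that $N\cong\Bbb R^{4}$, and compute $n\mapsto sns^{*}$ for $s=\left(\begin{smallmatrix} r_1 & 0\\ r & r_2\end{smallmatrix}\right)$, $r_1,r_2>0$. A direct multiplication gives the coordinate action
$$a'=r_1^{2}a,\qquad c'=r_1r_2\,c+i\,r_1 r\,a,\qquad b'=r_2^{2}b+2r_2\,\mathrm{Im}(c\bar r)+|r|^{2}a ,$$
and one checks that the image is again skew-Hermitian (its $(1,2)$-entry equals $-\bar c{}'$), which is the basic consistency check on the formula.

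Second, I would extract two sign invariants. Since $r_1>0$, the relation $a'=r_1^{2}a$ shows that $\mathrm{sgn}(a)=\mathrm{sgn}(\mathrm{Im}\,n_{11})$ is invariant. For the second one I would note that $in$ is Hermitian and that the action is congruence $in\mapsto s(in)s^{*}$; hence $\det(in)=ab-|c|^{2}$ transforms by the positive factor $|\det s|^{2}=r_1^{2}r_2^{2}$, so $\mathrm{sgn}(ab-|c|^{2})$ is invariant too. The conceptual point worth emphasizing is that under the full group $GL_2(\Bbb C)$ Sylvester's law would give only three congruence classes (the indefinite one being single), whereas the triangular group $S$ cannot change $\mathrm{sgn}(a)$; this extra invariant splits the indefinite class in two, and the pair $(\mathrm{sgn}(a),\mathrm{sgn}(ab-|c|^{2}))$ attains all four values, realized by the four base points $\left(\begin{smallmatrix}\pm i & 0\\ 0 & \pm i\end{smallmatrix}\right)$.

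Third, for the measure statement together with freeness and transitivity: the locus where either invariant vanishes, $\{a=0\}\cup\{ab=|c|^{2}\}$, is a finite union of real hypersurfaces and hence has measure zero, while on its complement the two invariants are locally constant and cut out exactly four open regions (every remaining orbit lies in the measure-zero set and so cannot have positive measure). I would then show each region is a single orbit by inverting the action: given a target $(a,b,c)$ with signs $(\epsilon_1,\epsilon_2)$ and the base point $a_0=\epsilon_1,\ b_0=\epsilon_2,\ c_0=0$, the displayed equations determine $r_1=\sqrt{|a|}$, then $r$ from the $c'$-equation, and finally $r_2$ from the $b'$-equation, which is solvable in $r_2>0$ precisely because the signs match; all three are uniquely determined. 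Thus the orbit map $s\mapsto sn_0s^{*}$ is a bijection of $S$ onto the region, giving transitivity, triviality of the stabilizer of $n_0$ (freeness), and hence faithfulness.

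Finally, the translation statement is just this orbit map read correctly. On characters, $\chi_s(n)=\chi(sns^{*})$ satisfies $(\chi_{s_1})_{s_2}=\chi_{s_1s_2}$, so $\chi\mapsto\chi_s$ is a \emph{right} action; under the identification $s\mapsto(\chi_0)_s$ the operator of $t$ sends $s\mapsto st$, i.e.\ the $S$-action on the orbit is right translation of $S$ on itself (on $N$ the analogous computation with $n\mapsto tnt^{*}$ gives left translation, the discrepancy being only the convention forced by dualizing). I expect the only genuine labor to be the explicit matrix multiplication and the bookkeeping in the inversion step; the one point requiring care is recognizing that $\mathrm{sgn}(a)$ is the non-$GL$ invariant separating the two indefinite $S$-orbits, so that one obtains four orbits rather than three.
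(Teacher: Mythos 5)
Your proof is correct; the paper gives no proof of this proposition at all (it is left as a direct check, like the assertion preceding it), and your explicit computation of $n\mapsto sns^*$ in the coordinates $(a,b,c)$, with the invariant pair $\bigl(\mathrm{sgn}\,a,\ \mathrm{sgn}(ab-|c|^2)\bigr)$ and the inversion argument giving transitivity and freeness, is exactly that check --- carried out more carefully than the paper's own wording, since $\mathrm{sgn}(\mathrm{Im}\,n_{22})$ by itself is not an $S$-invariant and your second invariant is what actually separates the four orbits. The only slip is cosmetic: for the base point to carry invariants $(\epsilon_1,\epsilon_2)$ you should take $a_0=\epsilon_1$, $b_0=\epsilon_1\epsilon_2$ rather than $b_0=\epsilon_2$ (as written, its second invariant is $\epsilon_1\epsilon_2$), which changes nothing in the solvability of the equations for $r_1$, $r$, $r_2$.
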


Note that there are also orbits of smaller dimension, which have zero measure, but we will not need them.

It is clear that all four actions of the group $S$ on the nondegenerate orbits are topologically isomorphic and the corresponding representations of the semidirect product differ only by a ${\Bbb Z}_2$-valued cocycle which acts as a multiplication,
so that it suffices to consider only one (any) orbit.

A unitary representation of the semidirect product of a group and a commutative group (in our case,   $P=S\rightthreetimes N$) has the following canonical realization. Consider a probability measure $\mu$ on the group of characters (i.e., on
 $\hat N$) that is quasi-invariant with respect to the action (of the group $S$). All such measures are equivalent, since $S$ is locally compact and its action is transitive. Hence in the Hilbert space $L^2_{\mu}(\hat N)$ we can define the unitary representation of the group~$P$ induced by the above action of $S$ and the representation of $N$ in which an element of $N$ acts as the multiplication by the corresponding character or $\hat N$.
 General representations are realized in the vector-valued space $L^2_{\mu}(\hat N)$, but we do not consider them.

The irreducibility of the above representation of the semidirect product is equivalent to the ergodicity of the measure $\mu$, and, by the above, we could assume that the orbit is the group $S$ itself, i.e., consider the representation of  $S$ in the space
 $L^2_{\mu}(S)$ over a measure
$\mu$ quasi-invariant with respect to the right action of the group.

These representations belong to the core of the group $P$; this follows from the fact that each of them is quasi-equivalent to the regular representation of the group, which (by the amenability of $S$) weakly contains the trivial representation. Besides, the core contains the trivial representation, as well as the special representations of the group $S$, which can be regarded as representations of $P$, since $S=P/N$. It is not known whether the core is exhausted by these representations, nor whether the four constructed representations are special for the group $P$, or, in other words, whether they have an almost invariant vector. But first we find out the structure of the special representations of the group $S$.

\begin{lemma}
The group $S$ has a continuum of unitary representations parametrized by the points
of $\Bbb C$ lying on the unit circle (characters). It has no faithful special representations.
\end{lemma}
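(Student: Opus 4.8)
The plan is to pin down the abstract structure of $S$ first and then run the Mackey little-group machine, after which both assertions fall out; in fact I would prove the stronger statement that $S$ has \emph{no} faithful irreducible unitary representation whatsoever, which contains the claim about special representations since here ``special'' means ``irreducible with nonvanishing $H^1$.'' Writing a general element as $(r_1,r_2,r)$ with $r_1,r_2>0$ and $r\in\mathbb{C}$, the product law is $(r_1,r_2,r)(r_1',r_2',r')=(r_1r_1',r_2r_2',rr_1'+r_2r')$. A direct computation gives the center $Z=\{(t,t,0):t>0\}\cong\mathbb{R}_+$ and the commutator $[S,S]=\{(1,1,r):r\in\mathbb{C}\}\cong\mathbb{C}$, with conjugation acting on the latter by the real scaling $r\mapsto(r_2/r_1)\,r$. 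Since $Z$ is a central direct factor with complement $G_0=\{r_1r_2=1\}$, we get $S\cong\mathbb{R}_+\times G_0$ with $G_0=\mathbb{C}\rtimes\mathbb{R}_+$, where $\lambda\in\mathbb{R}_+$ acts on $\mathbb{C}\cong\mathbb{R}^2$ by the scalar $\lambda$. This structural reduction is the whole point: the torus acting on the two-dimensional commutator is only one-dimensional and acts by scalars.

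Next I would classify $\widehat S$. As $S$ is a direct product of type~I groups, every irreducible unitary representation is an external tensor product $\chi\otimes\rho$ with $\chi\in\widehat{\mathbb{R}_+}$ and $\rho\in\widehat{G_0}$. For $G_0$ the dual action of $\mathbb{R}_+$ on $\widehat{\mathbb{C}}\cong\mathbb{C}$ is again scaling, whose nonzero orbits are the open rays; the orbit space is the unit circle, and each ray has trivial little group. Mackey's theorem then yields, besides the one-dimensional characters that factor through $S^{\mathrm{ab}}\cong\mathbb{R}_+^2$, a continuum of infinite-dimensional representations $\rho_\theta=\operatorname{Ind}_{\mathbb{C}}^{G_0}\chi_{\xi}$ indexed by the direction $\theta\in S^1$ of $\xi$. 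This is exactly the continuum ``parametrized by the points of $\mathbb{C}$ on the unit circle (characters)'' of the first assertion.

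Finally I would establish non-faithfulness. A one-dimensional $\rho$ kills all of $[S,S]=\mathbb{C}$. For $\rho_\theta$ the restriction to $\mathbb{C}$ is the direct integral of the characters $\chi_{\lambda\xi}(n)=e^{i\langle\lambda\xi,n\rangle}$ over the ray $\{\lambda\xi:\lambda>0\}$; hence $n\in\mathbb{C}$ lies in $\ker\rho_\theta$ iff $\langle\lambda\xi,n\rangle\in2\pi\mathbb{Z}$ for every $\lambda>0$, which forces $\langle\xi,n\rangle=0$. Thus $\ker\rho_\theta\supseteq\xi^{\perp}$, a nonzero real line in $\mathbb{C}$. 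Consequently every $\chi\otimes\rho$ annihilates a nontrivial subgroup of the commutator, so no irreducible unitary representation of $S$ is faithful, and a fortiori none of the special ones is.

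The one genuinely decisive step, which I would flag as the heart of the matter, is the orbit/annihilator count in the last paragraph: because the acting group is one-dimensional and scalar, every nonzero orbit in $\widehat{\mathbb{C}}$ is a ray, leaving a one-dimensional annihilator $\xi^{\perp}$ that the induced representation simply cannot detect. This is precisely where $S$ differs from $\operatorname{Aff}(\mathbb{R})$, whose commutator is one-dimensional so that the analogous annihilator is trivial and the infinite-dimensional representation is faithful and special. Everything else is the routine semidirect-product bookkeeping of the Mackey machine.
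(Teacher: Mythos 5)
Your proof is correct, and it takes a genuinely different route from the paper's. The paper argues by reduction to the affine group: fixing the determinant and a unitary character of $\mathbb{C}=\mathbb{R}^2$ gives, for each point of the unit circle, a homomorphism of $S$ onto $\mbox{Aff}(\mathbb{R})$ (the Iwasawa subgroup of $SL(2,\mathbb{R})$), through which the two faithful special unitary representations of $\mbox{Aff}(\mathbb{R})$ lift to special --- and, since the quotient has a nontrivial kernel, automatically non-faithful --- representations of $S$; the paper then asserts, without proof, that $S$ has no other special representations, and that assertion is what makes its second claim complete. You replace that unproved completeness step by a stronger, self-contained statement: after the decomposition $S\cong\mathbb{R}_+\times(\mathbb{C}\rtimes\mathbb{R}_+)$, the Mackey machine classifies all irreducible unitary representations, and your kernel computation shows each one annihilates either all of $[S,S]=\mathbb{C}$ or at least the real line $\xi^{\perp}$, so $S$ has no faithful irreducible unitary representation at all, in particular no faithful special one. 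The two arguments hinge on the same geometric fact: your annihilator $\xi^{\perp}$ is precisely the kernel of the paper's quotient onto $\mbox{Aff}(\mathbb{R})$, and your $\rho_\theta$ are precisely the lifts (up to tensoring with central characters) of the representations of $\mbox{Aff}(\mathbb{R})$ that the paper invokes. What the paper's route buys is brevity and the thematic link to the rank-one Iwasawa picture with its known special representations; what yours buys is rigour, since it proves the classification the paper only asserts, and the stronger conclusion sidesteps having to decide which irreducibles carry nontrivial cohomology. One caveat worth stating explicitly: your argument concerns unitary representations only, which is the correct reading here, because the paper's ``special'' means an irreducible unitary representation with a nontrivial $1$-cocycle.
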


\begin{proof}
The affine group $\mbox{Aff}(\Bbb R)$, i.e., the group of matrices
 $$\left(
                                                    \begin{array}{cc}
                                                      e^a & 0 \\
                                                      b & e^{-a} \\
                                                    \end{array}
                                                  \right), \qquad a, b \in {\Bbb R},$$
is the Iwasawa subgroup for
$SL(2,R)$ and plays the same role as the group $P$ does for
 $U(2,2)$. Note that, regarded as a subgroup of
$SL(2,R)$, it is the semidirect product
of ${\Bbb R}_+$ and $\Bbb R$ that agrees with the matrix representation; as mentioned above, in our situation this is not the case. It has (two) faithful special unitary representations, and they can be extended to a special representation of
$SL(2,R)$.

Consider the group $S$:
                     $$   \left(
                     \begin{array}{cc}
                       r_1 & 0 \\
                       r & r_1 \\
                    \end{array}
                  \right),\qquad
                  r\in {\Bbb C}, \quad r_1,r_2>0;
                  $$
note that if we fix a value of the determinant  ($=r_1\cdot r_2$)  and  a unitary character on the group   ${\Bbb C}={\Bbb R}^2$,
the group $S$ can be mapped isomorphically to the group $\mbox{Aff}(\Bbb R)$, and hence all special representations of  $\mbox{Aff}(\Bbb R)$ can be lifted to representations of $S$; all of them are not faithful; the group~$S$ has no other special representations.
\end{proof}

However, we are interested not as much in the group $S$, but in the group $P$. Conjecturally, special unitary representations of $P$ can be constructed using special (nonfaithful) representations of $S$ together with the representations of $P$ constructed above. At the moment, the question  of whether such representations exist is open. For completeness, we describe a model of the Hilbert space of a special unitary representation for the group
 $SL(2,R)$ and its triangular subgroup ($P$). Consider the space
 $L^2_m({\Bbb R}_+)$  (where $m$ is the Lebesgue measure on the half-line). It is more convenient to pass to the Fourier transform, and then the required representation of the triangular subgroup (written as the group of transformations
 $x \mapsto e^{\beta}x+a$ with $\beta, a \in \Bbb R$)
can be realized in $L^2_{\hat m}(\Bbb R)$ (where ${\hat m}$  is the Lebesgue measure on the line) as follows:
$$(U_{a,\beta}F)(z)=\exp\{iae^z\} F(z+\beta), \quad z \in \Bbb R. $$
An almost invariant vector in this model is an arbitrary function $f$ satisfying, for any
 $t, a,b \in \Bbb R$, the conditions
$$ f(x)=0 \mbox{ if } x>t \in {\Bbb R};$$
$$f \notin L^2; \quad (1-\exp\{ie^zb\})f \in L^2; \quad [f(\cdot)-f(\cdot+a)] \in L^2.$$
Another, more popular, description of the special representation (see
\cite{VGG1}) in a space of analytic functions is related to the limit of complementary series representations as they tend to the trivial representation.

\subsection{Almost invariant measures and nonunitary representations}

We say that a measure $\nu$ on a group $S$  is {\it (right) almost invariant} if it is infinite, absolutely continuous with respect to the right Haar measure on $S$ (and hence quasi-invariant with respect to the right translations
 $s \mapsto ss_0$), and its derivatives $\frac{d\nu(ss_0)}{d\nu(s)}$
are defined and bounded for every
 $s_0\in S$. (By the above isomorphism
$S\rightarrow H$, where $H$ is an arbitrary nondegenerate $S$-orbit on the group of characters $\hat N$, this definition can be translated to measures concentrated on any of the nondegenerate orbits of the group $\hat N$).

The almost invariance condition is obviously satisfied for the Lebesgue measure on $S$, i.e.,
$$ds=ds_{11} ds_{22} ds_{21} d\bar s_{21},$$
since $d(ss_0)=\pi(s)ds$, where $\pi(s)=s_{11}^3s_{22}$. It follows that this condition holds for any measure of the form
$d\nu(s)=u(s)ds$, where $u(s)$ is an arbitrary function such that the ratio
$\frac {u(ss_0)}{u(s)}$ is a bounded function for every
$s_0\in S$. In particular, it holds for the measure
  $\mu$ that is invariant under the right translations (the Haar measure):
$$
d\mu(s)=\pi^{-1}(s)ds, \quad \pi(s)=s^3_{11}s_{22}.
$$
However,  for our purposes it is convenient to consider another measure.

Assume that a group $G$ acts on a space $X$, and we are given two equivalent
$G$-quasi-invariant measures
$\mu$ and $\nu$ on $X$. Assume that the density of one measure with respect to the other one is bounded away from zero and infinity. In the spaces $L^2_{\mu}(X)$ and $L^2_{\nu}(X)$ we consider the representation of the group $G$   by the substitutions $(U_g f)(\cdot)=f(g\cdot)$ and the natural representation of the group of multiplicators with absolute value equal to one.
The well-known isometry between these spaces, which multiplies a function by the square root of the density of one measure with respect to the other one, commutes with the multiplicators, but, in general, does not commute with the action of the group. This isometry is widely used to correct the action; for instance, if one of the measures is invariant, and thus determines a unitary representation of the cross product, then the corrected action also becomes unitary.

Let $\nu$ be an almost invariant measure on $H$; consider a nonunitary representation of the group $P$ in the Hilbert space $L^2(S,\nu)$, i.e., the space of functions $F$ on $S$ with the norm $\|F\|^2=\int_S |F(s)|^2 d\nu(s)<  \infty$. By definition, the operators corresponding to the elements of the subgroups $N$ and $S$ are given by the following formulas:
\begin{eqnarray}
(T(n)F)(s)&=&\chi_k(n,s)F(s) \quad\mbox{ for } n\in N; \\
(T(s_0)F)(s)&=&F(ss_0) \quad\mbox{ for } s_0\in S.
\end{eqnarray}
Here $\chi_k(n,s)$ is the image of a character
$\chi(\cdot) \in \hat N$ regarded as a function on $S$ under the (unique) isomorphism between the orbit of $S$ in $\hat N$ indexed by $k=1,2,3,4$ and $S$ preserving the action of $S$; the difference between the four orbits reduces to multiplying the image by
$\pm i$ in each of the variables. It follows from the definition that the operators
 $T(n)$ are unitary and the operators
$T(s_0)$ are bounded, by the almost invariance of the measure $\nu$. One can rewrite the formulas in a more compact form:
\begin{eqnarray}
(T(n)f)(s)&=&\chi(sns^*)f(s) \quad\mbox{ for } n\in N; \\
(T(s_0)f)(s)&=&f(ss_0) \quad\mbox{ for } s_0\in S,
\end{eqnarray}
where $\chi$ is a fixed character of the $S$-orbit $H$ on the group of characters
 $\hat N$.

It is not difficult to verify that the operators of the subgroups $N$ and $S$ together generate a representation of the whole group $P$ in the space
$L^2(S,\nu)$. In particular, if $\nu=\mu$ is the Haar measure on $S$, this representation is unitary. Denote these representations of the group $P$ by
 $\pi_k$, $k=1,2,3,4$; since all four representations essentially differ from one another only by a choice of a character on the orbits, we omit the index $k$.

\begin{theorem}
The nonunitary representations $\pi$ of the group $P$ defined above are operator irreducible and space irreducible.
The representations corresponding to different measures and different indices $k$ are space (unitarily) equivalent if and only if the measures $\nu$ differ by a factor $(\nu'=c\nu)$ and the indices $k$ coincide.
\end{theorem}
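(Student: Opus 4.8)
The plan is to exploit the fact that, although the full representation $\pi$ is non-unitary (the shift operators $T(s_0)$ are merely bounded), the operators $T(n)$ for $n\in N$ are \emph{unitary} multiplication operators on $L^2(S,\nu)$, namely multiplication by the modulus-one functions $\psi_n(s)=\chi(sns^*)$. The key lemma I would establish first is that the family $\{T(n):n\in N\}$ generates the maximal abelian von Neumann algebra $\mathcal{A}=L^\infty(S,\nu)$ of all multiplication operators. For this I would note that $\psi_{n_1}\psi_{n_2}=\psi_{n_1+n_2}$ and $\overline{\psi_n}=\psi_{-n}$, so the $\psi_n$ span a conjugation-closed subalgebra of $C_b(S)$ containing the constants, and that this subalgebra \emph{separates the points} of $S$: by the Proposition the action of $S$ on the orbit $H\subset\hat N$ is free, so the assignment $s\mapsto\chi(s\cdot s^*)$ is injective and distinct points carry distinct characters. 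A Stone--Weierstrass argument then shows the generated von Neumann algebra is all of $L^\infty(S,\nu)$, whence $\{T(n)\}'=\mathcal{A}'=\mathcal{A}$.

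Granting the lemma, both irreducibility statements reduce to the \emph{ergodicity of the right translation action} of $S$ on $(S,\nu)$, which holds because this action is transitive and $\nu$ is quasi-invariant. For operator irreducibility, any bounded $A$ commuting with $\pi$ commutes in particular with every $T(n)$, hence lies in $\mathcal{A}$ and is multiplication by some $\phi\in L^\infty$; commuting with $T(s_0)$ forces $\phi(ss_0)=\phi(s)$ a.e.\ for every $s_0$, and ergodicity makes $\phi$ constant, so $A$ is scalar. For space irreducibility, a closed invariant subspace $M$ is in particular invariant under the self-adjoint algebra $\mathcal{A}$; hence $M^\perp$ is too, the orthogonal projection $P_M$ commutes with the MASA $\mathcal{A}$, so $P_M\in\mathcal{A}$ is multiplication by an indicator $\mathbf{1}_E$ and $M=L^2(E,\nu)$. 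Invariance under the shifts $T(s_0)$ then forces $Es_0=E$ up to null sets for all $s_0$, and ergodicity gives $\nu(E)=0$ or $\nu(S\setminus E)=0$; thus $M$ is trivial.

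For the classification I would argue in two directions. The easy direction: if $k=k'$ and $\nu'=c\nu$, then $f\mapsto c^{-1/2}f$ is a unitary map $L^2(S,\nu)\to L^2(S,\nu')$ intertwining the two representations, since the operator formulas do not involve the measure. For the converse, suppose $U:L^2(S,\nu)\to L^2(S,\nu')$ is a unitary intertwiner. Restricting to $N$, the two representations become multiplication representations of the abelian group $N$ whose spectral measures are carried by the orbits $H_k$ and $H_{k'}$ respectively; since the four orbits are disjoint, the spectral theorem forces $H_k=H_{k'}$, i.e.\ $k=k'$. With $k=k'$ fixed, $U$ intertwines the same operators $M_{\psi_n}$ on both sides, hence commutes with the whole algebra $\mathcal{A}$ they generate, so $U$ is itself a multiplication operator $(Uf)(s)=m(s)f(s)$. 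Unitarity gives $|m(s)|^2\,d\nu'(s)=d\nu(s)$ (in particular $\nu$ and $\nu'$ are equivalent), while intertwining with $T(s_0)$ yields $m(ss_0)=m(s)$ a.e.; ergodicity makes $m$ a constant $m_0$, and then $d\nu=|m_0|^2\,d\nu'$, i.e.\ $\nu'=c\nu$ with $c=|m_0|^{-2}$.

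The main obstacle is the key lemma --- proving that the unitary operators $T(n)$ already generate the full multiplication MASA $L^\infty(S,\nu)$. Everything else is a standard application of the Mackey-type analysis of semidirect products together with ergodicity; the non-unitarity of $\pi$ is not an essential difficulty here, precisely because the generating subalgebra comes from the unitary part $T(N)$ of the representation and the intertwiners in the classification are unitary by hypothesis. Care is needed only with the routine measure-theoretic passage from ``$\phi(ss_0)=\phi(s)$ a.e.\ for each fixed $s_0$'' to ``$\phi$ constant'' (the Fubini argument underlying ergodicity of translation) and with checking that the almost invariant measures in play are $\sigma$-finite, so that $L^\infty(S,\nu)$ is genuinely maximal abelian.
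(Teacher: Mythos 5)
The paper states this theorem with no proof at all (it is one of the assertions the authors leave as directly verifiable), so there is no argument of theirs to compare against; judged on its own merits, your proposal is essentially correct, and it is the natural Mackey-type proof: reduce everything to the fact that $T(N)$ generates the multiplication MASA, then use ergodicity of right translation and disjointness of the four orbits. Two steps need repair, though neither is fatal. First, the ``Stone--Weierstrass argument'' for the key lemma: $S$ is not compact, so Stone--Weierstrass gives you nothing about $C_b(S)$ or $L^\infty(S,\nu)$. What you actually need is the measure-theoretic version: a countable subfamily of the point-separating functions $\psi_n$ gives an injective Borel map of the standard Borel space $S$ into $\mathbb{T}^{\mathbb{N}}$, hence (Lusin--Souslin) generates the full Borel $\sigma$-algebra modulo $\nu$-null sets, and therefore the von Neumann algebra generated by the $M_{\psi_n}$ is all of $L^\infty(S,\nu)$; with $\nu$ $\sigma$-finite this is a MASA, and the rest of your commutant argument goes through. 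Second, in the converse direction of the classification there is a circularity as written: to pass from ``$U$ intertwines the generators $M_{\psi_n}$'' to ``$U$ is a multiplication operator'' you implicitly identify $L^\infty(S,\nu)$ with $L^\infty(S,\nu')$, which presupposes $\nu\sim\nu'$, yet you only deduce equivalence afterwards from $|m|^2\,d\nu'=d\nu$. The fix is immediate and is even stated in the paper: almost invariant measures are quasi-invariant and absolutely continuous with respect to right Haar measure, and any nonzero $\sigma$-finite quasi-invariant measure on $S$ is equivalent to Haar measure, so $\nu\sim\nu'$ holds a priori; then compose $U$ with the unitary $f\mapsto (d\nu'/d\nu)^{1/2}f$ to get a unitary on $L^2(S,\nu)$ commuting with the MASA, hence lying in it, which makes $U$ multiplication by some $m$, and your ergodicity argument finishes the classification. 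With these two repairs, your proof of both irreducibility statements and of the equivalence criterion is complete, and it supplies an argument the paper itself omits.
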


An important question for us is how the cohomology depends on the measure. We emphasize that changing the measure and, in particular, the unitarization of representations (see above) does not induce an isomorphism of the cohomology groups
$H^1(G,\pi_{\mu})$ and  $H^1(G,\pi_{\nu})$ where $\pi_{\mu}$,  $\pi_{\nu}$ are the representations corresponding to the measures
$\mu$ and $\nu$, since a space isometry does not  in general send a cocycle of a group with values in one space to a cocycle with values in another space. In other words, for a given action of the group, the cohomologies with values in the Hilbert space
$L^2_{\mu}(X)$ for various almost invariant measures $\mu$ are in general different. In the next section we choose an almost invariant measure for which the cohomology is nontrivial.

\subsection{A faithful nonunitary special irreducible representation of the Iwasawa group}
An important question for us is how the cohomology with values in $L_\nu^2(X)$
depends on the almost invariant measure $\nu$.
We emphasize that changing the replacement of the measure by an equivalent one
and, in particular, the unitarization of representations (see above)
does not induce an isomorphism of the cohomology groups
$H^1(G,\pi_{\mu})$ and $H^1(G,\pi_{\nu})$, where $\pi_{\mu}$, and $\pi_{\nu}$
are the representations corresponding to the measures
$\mu$ and $\nu$, since a space isometry does not in generally send
a cocycle of a group with values in one space to a cocycle with values in another
the other space. In other words, for a given action of the group,
the cohomologies with values in the Hilbert space
$L^2_{\mu}(X)$ for various almost invariant measures $\mu$ are in generally different.
This explains our choice of an almost invariant measure in what follows.
In the next section we choose an almost invariant measure for which the cohomology is nontrivial.

Let us fix an almost invariant measure $\nu$ on the space $S$ and introduce the space $Z_{\nu}$
of measurable functions on $S$ satisfying the two conditions
\begin{align*}
\int_S|f(ss_0)-f(s)|^2\,d\nu(s)&<\infty\quad\text{for any}\,s_0\in S,\\
\int_S|(\chi(sns^*)-1)f(s)|^2\,d\nu(s)&<\infty\quad\text{for any}\,n\in N.
\end{align*}

Obviously, $L^2_{\nu}(S) \subset Z_{\nu}$.

We treat the elements of $f \in Z_{\nu}$ as coboundaries and the space of functions  of the form $b(g)=T(g)f-f$ as
the space of cocycles. If $f\in L^2_{\nu}(S)$, then $b(g)=T(g)f-f$ is  a cocycle cohomologous to zero.
This implies the following assertion.

\begin{lemma}
A representation of the group $P$ in the space $L^2_{\nu}(S)$ is special if and only if
$$
L^2_{\nu}(S)\varsubsetneq Z_{\nu},
$$
i.e., if there exists a coboundary not lying in $L^2_{\nu}(S)$. Thus, the first cohomology group has the form
$$
H^1(P;L^2_{\nu}(S) =Z_{\nu}/Z_{\nu}L^2_{\nu}(S).
$$
\end{lemma}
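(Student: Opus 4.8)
The plan is to establish an explicit bijection between the functions $f\in Z_\nu$ and the $1$-cocycles of $P$ with values in $L^2_\nu(S)$, under which the coboundaries correspond exactly to the subspace $L^2_\nu(S)\subset Z_\nu$. The starting observation is that the two integrability conditions defining $Z_\nu$ are precisely the requirement that the \emph{formal coboundary} $b_f(g):=T(g)f-f$ land in $L^2_\nu(S)$ when $g$ ranges over the generating subgroups: for $s_0\in S$ one has $(b_f(s_0))(s)=f(ss_0)-f(s)$, and for $n\in N$ one has $(b_f(n))(s)=(\chi(sns^*)-1)f(s)$. Since $P$ is generated by $N$ and $S$ and the cocycle identity propagates membership in $L^2_\nu(S)$ from the generators to all of $P$ (using that every $T(g)$ is bounded), the condition $f\in Z_\nu$ is equivalent to $b_f$ being a genuine $L^2_\nu(S)$-valued cocycle. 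The map $f\mapsto b_f$ is injective, because $b_f\equiv 0$ forces $(\chi(sns^*)-1)f(s)=0$ for every $n$, whence $f=0$ almost everywhere.

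The heart of the argument is surjectivity: every cocycle $b$ must be of the form $b_f$ for some $f\in Z_\nu$. First I would restrict $b$ to the abelian subgroup $N$, on which $T$ acts by multiplication by the character $\phi_n(s):=\chi(sns^*)$. Because $\chi$ is nontrivial and $n\mapsto sns^*$ is an automorphism of $N$ for each fixed $s$, the character $n\mapsto\phi_n(s)$ is nontrivial for \emph{every} $s\in S$; equivalently, $H^1(N,\mathbb{C}_{\phi(s)})=0$ pointwise. The cocycle identity over the abelian group $N$ yields $(\phi_n(s)-1)\,b(n')(s)=(\phi_{n'}(s)-1)\,b(n)(s)$, so the ratio $f(s):=b(n)(s)/(\phi_n(s)-1)$ does not depend on the choice of $n$ with $\phi_n(s)\neq 1$; letting $n$ run over a fixed countable dense subset of $N$ exhibits $f$ as measurable. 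By construction $b(n)=T(n)f-f$ for all $n\in N$, which is already the second defining condition of $Z_\nu$.

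Next I would propagate this identity to $S$ using the semidirect-product relation $s_0 n=n's_0$ in $P$, where $n'=s_0 n s_0^*$. Expanding $b(s_0 n)$ in two ways by the cocycle identity, substituting $b(n)=T(n)f-f$ and $b(n')=T(n')f-f$, and using $T(s_0)T(n)=T(n')T(s_0)$, all terms collapse to
$$(T(n')-1)\bigl(T(s_0)f-f-b(s_0)\bigr)=0\qquad\text{for all }n\in N.$$
Since $n\mapsto\phi_{n'}(s)$ is again nontrivial at every $s$, the operator $T(n')-1$ has trivial pointwise kernel, forcing $b(s_0)=T(s_0)f-f$. This is the first defining condition of $Z_\nu$, so $f\in Z_\nu$ and $b=b_f$ on the generators, hence on all of $P$. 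I expect this propagation step --- correctly invoking the pointwise vanishing of $H^1(N,\mathbb{C}_{\phi(s)})$ and verifying measurability of the recovered $f$ --- to be the main technical obstacle; the remaining points are routine.

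Finally, having identified $Z_\nu$ with the full space of cocycles $Z^1(P;L^2_\nu(S))$, I would note that a cocycle is cohomologous to zero iff it equals $b_\xi$ for some $\xi\in L^2_\nu(S)$; since $L^2_\nu(S)\subset Z_\nu$ and $f\mapsto b_f$ is injective, the coboundaries are exactly the image of $L^2_\nu(S)$. Therefore $H^1(P;L^2_\nu(S))\cong Z_\nu/L^2_\nu(S)$, and the representation is special --- possesses a cocycle not cohomologous to zero --- precisely when this quotient is nonzero, that is, when $L^2_\nu(S)\subsetneq Z_\nu$.
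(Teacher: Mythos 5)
Your proposal is correct, and it is substantially more complete than what the paper itself does. The paper gives no real proof: immediately before the lemma it \emph{declares} that the elements $f\in Z_\nu$ are to be treated as the coboundaries and the functions $b(g)=T(g)f-f$ as the cocycles, notes that $f\in L^2_\nu(S)$ gives a cocycle cohomologous to zero, and then says ``this implies the assertion.'' Read that way, the lemma is essentially definitional. What you prove --- that \emph{every} $L^2_\nu(S)$-valued $1$-cocycle of $P$ is of the form $b_f$ for a measurable $f\in Z_\nu$ --- is exactly the missing justification for the paper's identification of $H^1(P;L^2_\nu(S))$ with $Z_\nu/L^2_\nu(S)$, and your two-step argument is the standard and correct one: the commutation identity $(T(n)-1)b(n')=(T(n')-1)b(n)$ on the abelian subgroup $N$ recovers $f$ as a ratio (with a countable dense subset of $N$ handling measurability and null sets, since $n\mapsto\chi(sns^*)$ is a nontrivial character for every $s$), and the semidirect-product relation $s_0n=n's_0$ collapses to $(T(n')-1)\bigl(T(s_0)f-f-b(s_0)\bigr)=0$, forcing $b(s_0)=T(s_0)f-f$ pointwise a.e. Your injectivity argument likewise supplies what the paper leaves implicit, namely that $b_f$ trivial forces $f=0$ a.e., so that coboundaries correspond exactly to $L^2_\nu(S)$. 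Two small points you should make explicit in a final write-up: (i) the identity $b(n)=T(n)f-f$ is first obtained only for $n$ in the countable dense set, and extending it to all $n\in N$ needs either the continuity of the cocycle $b$ (which the paper builds into its definition) or a repetition of the a.e.\ ratio argument for each fixed $n$ against the countable family; (ii) in the propagation step the function $h=T(s_0)f-f-b(s_0)$ is a priori only measurable, not in $L^2_\nu(S)$, so the conclusion $h=0$ must be drawn pointwise a.e.\ via the same countable-density argument rather than by invoking invertibility of $T(n')-1$ as an operator on $L^2_\nu(S)$. Neither point is a gap in substance; both are handled by the machinery you already set up.
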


A measure $\nu$ is said to be special if $H^1(P;L^2_{\nu}(S))\neq 0$, i.e., if there exist coboundaries not
lying in  $L^2_{\nu}(S)$. The authors do not know whether
the Haar measure $m$ on $S$ is special and, thereby, whether
the natural unitary representation of the group $P$ on $L^2_{m}(S)$ is special.
For this reason, we use a different almost invariant measure to
construct a special, but not unitary, representation of $P$.

In what follows, we fix an almost invariant measure $\nu$ of the following form:
$$
d\nu(s)=|s|^{-4}\,ds,\quad\text{where }\,|s|^2=tr(s^*s)=s_{11}^2+s_{22}^2+|s_{21}|^2.
$$

It is convenient to write this measure in polar coordinates on $S$.
To do this end, note that the variety of elements $\omega \in S$
with norm $|\omega|=1$ is equivalent to a domain on the unit sphere in $\mathbb{R}^4$.
We define spherical coordinates of a matrix $s\in S$ we mean in equation 3 as
the number $r=|s|$ and the matrix $\omega=|s|^{-1}s$. Then $s=r\omega$,
and the expression for $\nu$ in polar coordinates has the form
$$
d\nu(s)=r^{-1}\,dr\,d\omega,
$$
where $d\omega$ is the invariant measure on the sphere.

The following assertion can be checked verified directly.

\begin{theorem} The representation $\pi$ of the group $P$ in the Hilbert space
$L^2(S,\nu)$, where $d\nu(s)=|s|^{-4}ds$, is special and has a nontrivial
cocycle of the form
\begin{equation}
b(g)=T(g)f-f,\quad\text{where }\,f(s)=e^{-|s|/2}.\tag{4}
\end{equation}
\end{theorem}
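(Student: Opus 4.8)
The plan is to show two things about the candidate function $f(s)=e^{-|s|/2}$: first, that the proposed cocycle $b(g)=T(g)f-f$ is genuinely a cocycle with values in $L^2_\nu(S)$, i.e.\ that $f\in Z_\nu$; and second, that it is *nontrivial*, i.e.\ that $f\notin L^2_\nu(S)$, so that by the preceding Lemma the representation is special. Let me think about both halves carefully.

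The plan is to invoke the preceding Lemma, according to which the representation $\pi$ on $L^2_\nu(S)$ is special precisely when $Z_\nu$ strictly contains $L^2_\nu(S)$, and the cocycle $b(g)=T(g)f-f$ is nontrivial precisely when $f\in Z_\nu\setminus L^2_\nu(S)$. So for $f(s)=e^{-|s|/2}$ I would establish two facts: (i) $f\notin L^2_\nu(S)$, which yields nontriviality, and (ii) $f\in Z_\nu$, i.e.\ $f$ satisfies the two defining integrability conditions, which guarantees that $b$ is a genuine $L^2_\nu(S)$-valued cocycle. Throughout I would work in the polar coordinates $s=r\omega$, $r=|s|$, in which $d\nu(s)=r^{-1}\,dr\,d\omega$ with $d\omega$ a finite measure on the spherical domain; every estimate then reduces to tracking the radial behavior as $r\to 0$ and $r\to\infty$.

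For (i), since $|f(s)|^2=e^{-r}$, I would simply compute $\int_S|f|^2\,d\nu=\bigl(\int_0^\infty e^{-r}r^{-1}\,dr\bigr)\bigl(\int d\omega\bigr)$ and observe that the radial integral diverges at $r=0$, because $e^{-r}r^{-1}\sim r^{-1}$ there. Hence $f\notin L^2_\nu(S)$. This single line is the very source of specialness: $f$ fails to be square-integrable only because of the mild singularity of $\nu$ at the origin.

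For (ii), the first condition $\int_S|f(ss_0)-f(s)|^2\,d\nu<\infty$ I would handle in two regimes. Near the origin, submultiplicativity of the matrix norm yields $\bigl|\,|ss_0|-|s|\,\bigr|\le C(s_0)\,|s|$, and since $x\mapsto e^{-x/2}$ is Lipschitz on $[0,\infty)$ one gets $|f(ss_0)-f(s)|\le C'(s_0)\,r$; thus the integrand is $\le C''r^2\cdot r^{-1}=C''r$, integrable near $0$. At infinity I would instead use $|f(ss_0)-f(s)|\le e^{-|ss_0|/2}+e^{-|s|/2}\le 2e^{-cr}$ (with $c>0$, since $|ss_0|\ge |s|/\|s_0^{-1}\|$), and the exponential decay dominates the $r^{-1}$ weight. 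For the second condition $\int_S|(\chi(sns^*)-1)f(s)|^2\,d\nu<\infty$, the key observation is that $\chi(n)=e^{iB(n)}$ for a real linear functional $B$, so that $B(sns^*)$ is a quadratic form in the entries of $s$; hence $|\chi(sns^*)-1|\le |B(sns^*)|\le C(n)\,|s|^2=C(n)\,r^2$. Near the origin the integrand is then $\le C\,r^4\cdot r^{-1}=C\,r^3$, integrable; at infinity I would bound $|\chi(sns^*)-1|\le 2$ and use the exponential decay of $|f|^2=e^{-r}$. Together these give $f\in Z_\nu$.

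The hard part --- or rather the only point requiring care --- is the balance at the origin, where $\nu$ carries the $r^{-1}$ singularity. What makes $f$ work is precisely that its two coboundary differences vanish fast enough there to overcome this singularity: the $S$-difference vanishes to first order ($\sim r$) by the Lipschitz estimate, while the $N$-difference vanishes to second order ($\sim r^2$) because $sns^*$ is quadratic in $s$. The asymmetry between these orders, together with the exact exponent $-4$ in the measure and the normalization $f(0)=1$, places $f$ right on the boundary: just barely outside $L^2_\nu(S)$ but with all coboundary differences inside it. Once both regimes are checked, the Lemma delivers that $\pi$ is special and that $b(g)=T(g)f-f$ is a nontrivial cocycle, completing the proof.
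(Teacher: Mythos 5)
Your proof is correct and follows exactly the route the paper intends: the paper itself gives no argument (it merely states that the assertion ``can be checked directly''), and your verification supplies that check within the paper's own framework --- the preceding Lemma characterizing specialness via $L^2_\nu(S)\varsubsetneq Z_\nu$ together with the polar-coordinate expression $d\nu=r^{-1}\,dr\,d\omega$. Both halves are sound: the logarithmic divergence of $\int_0 e^{-r}r^{-1}\,dr$ gives $f\notin L^2_\nu(S)$, and the $O(r)$ vanishing of the $S$-differences (Lipschitz estimate plus norm submultiplicativity) and the $O(r^2)$ vanishing of the $N$-differences (quadratic dependence of $sns^*$ on $s$) beat the $r^{-1}$ singularity at the origin, while the exponential decay of $f$ handles the behavior at infinity.
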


\subsection{Extending the special nonunitary representation of the subgroup $P$ to the whole group $U(2,2)$}

It remains to check that the special representation can be extended to a nonunitary representation of the whole group
$U(2,2)$. The construction of the required extension is based on the following property of the group $U(2,2)$. Every element $g\in U(2,2)$ can be uniquely written as a product
$g=pk$, $p\in P$,  $k\in K$, where $K$ is the maximal compact subgroup, which consists of the elements $k\in U(2,2)$ satisfying the relation
$kk^*=e$, i.e., the subgroup of block matrices of the form
$$k=\left(
      \begin{array}{cc}
       \alpha  & \beta \\
        \beta &\alpha \\
      \end{array}
    \right)
$$
where $\alpha\alpha^*+\beta\beta^*=e_p$ and $\alpha\beta^*+\beta\alpha^*=0$ (the Iwasawa decomposition).

Let $T$ be the special representation of the subgroup $P$ in the Hilbert space
$H=L^2(S,\nu)$ defined in Theorem~2, and $b$ be the nontrivial cocycle defined by~(8). Denote by $H_0$ the linear invariant subspace in $H$ spanned by the vectors
$b(p)$, $p\in P$.

\begin{lemma}
 {\rm1.} The subspace  $H_0$ is dense in $H$.

{\rm 2.} The vectors $b(p)$, $p\ne e$, are linearly independent (the nondegeneracy property).

Let $T(k)$, $k\in K$, be the operators defined on the set of vectors $b(p)$, $p\in P$, by the formula
$$T(k)b(p)=b(p'),$$
where $p'\in P$ is defined by the relation $kp=p'k'$, $k'\in K$.

{\rm 3.} The operators  $T(k)$ satisfy the group relation
$T(k_1k_2)b(p)=T(k_1)T(k_2)b(p)$ for any $k_1,k_2\in K$ and $p\in P$,
and hence generate a representation of the subgroup $K$ in the subspace
 $H_0$.
\end{lemma}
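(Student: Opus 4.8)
The three parts are of increasing depth, and I would prove them in the order 1, 2, 3, with part~3 resting on part~2. For part~1 no computation is needed: by the cocycle identity $b(p_1p_2)=T(p_1)b(p_2)+b(p_1)$ we have $T(p_1)b(p_2)=b(p_1p_2)-b(p_1)\in H_0$, so $H_0$ is a $T(P)$-invariant subspace, and it is nonzero because the cocycle constructed above is nontrivial. Its closure $\overline{H_0}$ is therefore a nonzero closed invariant subspace, so by the space irreducibility of $\pi$ established earlier, $\overline{H_0}=H$; that is, $H_0$ is dense.

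For part~2 I would first reduce the statement to a concrete functional equation. Writing $p\in P$ through the semidirect structure with $S$-component $a=a(p)\in S$ and $N$-component $m=m(p)\in N$, the defining formulas $(T(n)f)(s)=\chi(sns^{*})f(s)$ and $(T(s_0)f)(s)=f(ss_0)$ compose to $(T(p)f)(s)=\chi(s\,m\,s^{*})\,e^{-|sa|/2}$, with $p\mapsto(a,m)$ a bijection. Since $f$ and every $T(p)f$ are continuous, a relation $\sum_i c_i\,b(p_i)=0$ holds everywhere and, after moving $f$ to the left as the term with $(a,m)=(e,0)$, becomes the homogeneous identity
\[
\sum_i c_i\,\chi(s\,m_i\,s^{*})\,e^{-|sa_i|/2}=0\qquad(s\in S).
\]
The structural point is that $|sa|^{2}=\operatorname{tr}(s^{*}s\,aa^{*})$, and by uniqueness of the Cholesky factorization distinct $a\in S$ give distinct positive Hermitian matrices $aa^{*}$, hence genuinely distinct quadratic forms. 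I would then group the terms by the value of $a_i$ and separate the resulting exponential envelopes.

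The separation is carried out along rays $s=tv$, $t\to\infty$, for fixed $v\in S$. There the envelope attached to $a$ becomes $e^{-t|va|/2}$ with rate $\tfrac12|va|=\tfrac12\operatorname{tr}(v^{*}v\,aa^{*})^{1/2}$, while the oscillating factor becomes $\chi(t^{2}vmv^{*})=\exp\bigl(i\langle\xi,vmv^{*}\rangle t^{2}\bigr)$, where $\xi$ is the frequency of the fixed character $\chi$. Because $\{v^{*}v:v\in S\}$ exhausts the positive Hermitian matrices, for generic $v$ the rates attached to the distinct $a$-groups are pairwise distinct; dividing by the slowest-decaying envelope and letting $t\to\infty$ annihilates every other group and forces the trigonometric polynomial $\sum_{i:a_i=a_{\ell_0}}c_i\exp(i\langle\xi,vm_iv^{*}\rangle t^{2})$ of the slowest group to tend to $0$. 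A trigonometric polynomial vanishing at infinity is identically zero (its mean square equals the sum of the squared moduli of its grouped coefficients), so $\sum_{i:\langle\xi,vm_iv^{*}\rangle=\omega}c_i=0$ for every frequency $\omega$; varying $v$ within the open cone where $\ell_0$ remains slowest then separates the frequencies $\langle\xi,vm_iv^{*}\rangle$ for distinct $m_i$, giving $c_i=0$ for all $i$ in that group. Peeling off envelopes slowest-first, an induction on the number of distinct $a$-values finishes the argument. \textbf{The main obstacle is exactly this two-scale separation}: one must choose $v\in S$ so as to simultaneously isolate a single exponential envelope and resolve the character frequencies, and verifying that the frequency-separating loci $\{v:\langle\xi,v(m_i-m_j)v^{*}\rangle=0\}$ are proper subvarieties of $S$ is precisely where the freeness and nondegeneracy of the $S$-action on the orbit (Proposition~2) enter.

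For part~3 everything is then formal. The relation $kp=p'k'$ together with the uniqueness of the Iwasawa decomposition $g=pk$ defines a map $\kappa_k\colon P\to P$, $p\mapsto p'$, which is the left $K$-action on $U(2,2)/K\cong P$; in particular each $\kappa_k$ is a bijection fixing $e$, and associativity plus Iwasawa uniqueness give $\kappa_{k_1}\circ\kappa_{k_2}=\kappa_{k_1k_2}$. Since the vectors $b(p)$, $p\ne e$, are linearly independent by part~2 and $b(e)=0$, the prescription $T(k)b(p)=b(\kappa_k(p))$ extends to a well-defined linear operator permuting this basis of $H_0$, and
\[
T(k_1)T(k_2)b(p)=b\bigl(\kappa_{k_1}(\kappa_{k_2}(p))\bigr)=b\bigl(\kappa_{k_1k_2}(p)\bigr)=T(k_1k_2)b(p),
\]
so the operators $T(k)$ satisfy the group law and define a representation of $K$ on $H_0$.
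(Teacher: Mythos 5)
The paper states this lemma with no proof at all (as with several other assertions in this section), so there is no proof of record to compare yours against; judged on its own, your argument is correct and supplies exactly the missing verifications. Parts 1 and 3 use the right ingredients: invariance of $H_0$ via the cocycle identity plus the space irreducibility asserted in the paper's theorem on the representations $\pi$ gives density, and uniqueness of the Iwasawa decomposition $g=pk$ gives both well-definedness of $\kappa_k\colon p\mapsto p'$ and the composition rule $\kappa_{k_1}\circ\kappa_{k_2}=\kappa_{k_1k_2}$. Your observation that the linear extension of $T(k)$ from the vectors $b(p)$ to $H_0$ is legitimate only because of part 2 (otherwise one would have to check consistency of the prescription on linear relations) is precisely the point of the nondegeneracy statement.

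Part 2 is the substantive claim, and your two-scale separation is sound; the two genericity facts it needs are both true, and since you left the second one as an assertion, here is how to nail them down. For the envelopes: $a\ne a'$ in $S$ implies $aa^*\ne a'a'^*$ (Cholesky uniqueness, as you say), and since the matrices $v^*v$, $v\in S$, run over all positive definite Hermitian matrices, whose real span is the space of all Hermitian matrices, the function $v\mapsto\operatorname{tr}\bigl(v^*v(aa^*-a'a'^*)\bigr)$ is a nonzero polynomial on $S$, so the rate-coincidence locus is a proper subvariety. For the frequencies: writing $\langle\xi,vdv^*\rangle=\langle v^*\xi v,\,d\rangle$ with $d=m_i-m_j\ne0$, identical vanishing in $v$ would mean that $d$ annihilates the entire $S$-orbit of $\xi$ in $\hat N$; but that orbit is open --- this is the part of Proposition~2 that actually gets used (openness of the positive-measure orbits, rather than freeness of the action) --- hence its linear span is all of $\hat N$, forcing $d=0$. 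So each frequency-separating locus is a proper subvariety, as you claimed. Since all loci involved are zero sets of finitely many nonzero real polynomials on the connected group $S$, you can fix a single $v$ avoiding all of them simultaneously and peel the envelope groups off slowest-first at this one $v$; that is slightly cleaner than ``varying $v$ within the open cone,'' because the identity of the slowest group changes with $v$, but this is presentational, not a gap. The mean-square argument killing a trigonometric polynomial that tends to zero at infinity is standard and correct.
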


\begin{theorem}
The operators $T(k)$, $k\in K$, together with the operators
$T(p)$, $p\in P$, generate a representation of the whole group
  $U(2,2)$ in the subspace $H_0$, in which the operator corresponding to the involution $\sigma=\left(
                                                              \begin{array}{cc}
                                                                0 & e_p \\
                                                                e_p& 0 \\
                                                              \end{array}
                                                            \right)$
is defined on the set of vectors
 $b(p)$ by the following formula:
\begin{equation}
T(\sigma)b(p)=b(\hat p), \qquad\hat p\in P,
\end{equation}
where $\hat p$ is uniquely determined by the relation
$\hat p \hat p^*= \sigma pp^*\sigma.$ In this extension, the operators corresponding to the elements of the subgroup $P$ are unitary, the operators corresponding to the elements of the subgroup $K$ are bounded, and the operator corresponding to the involution is unbounded and cannot be extended to the whole space $H$. The extension of the $1$-cocycle $b$ from the group $P$ to the group $U(2,2)$ is given by the formula
\begin{equation}
b(pq)=b(p) \mbox{ for any } p\in P \mbox{ and } q\in Q.
\end{equation}
\end{theorem}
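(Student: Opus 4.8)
\medskip
\noindent\textbf{Proof proposal.}

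The plan is to realize all three families of operators --- $T(p)$ for $p\in P$, $T(k)$ for $k\in K$, and $T(\sigma)$ --- as instances of a single formula attached to the action of $U(2,2)$ on the homogeneous space $U(2,2)/K$, which by the Iwasawa decomposition is identified with $P$. For $g\in U(2,2)$ write its decomposition as $g=g_P\,g_K$ with $g_P\in P$, $g_K\in K$, uniquely determined by $g=pk$; this defines a projection $g\mapsto g_P$ onto $P$. First I extend the cocycle $b$ from $P$ to $U(2,2)$ by setting $b(g):=b(g_P)$, so that $b$ is constant on right cosets of $K$, i.e. $b(gk)=b(g)$ for $k\in K$; this is the asserted extension of the cocycle. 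I then \emph{define} the operators on the dense invariant domain $H_0^{\mathrm{alg}}:=\operatorname{span}\{b(p):p\in P\}$ by
\begin{equation*}
T(g)\,b(p)=b(gp)-b(g)=b\big((gp)_P\big)-b(g_P),\qquad g\in U(2,2),\ p\in P.
\end{equation*}
Since the vectors $b(p)$, $p\neq e$, are linearly independent (preceding lemma), this yields a well-defined linear operator, and each $T(g)$ maps $H_0^{\mathrm{alg}}$ into itself, so all products are defined on the common domain $H_0^{\mathrm{alg}}$.

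The decisive algebraic input is the functoriality of the Iwasawa projection,
\begin{equation*}
(g_1 g_2)_P=\big(g_1\,(g_2)_P\big)_P,\qquad (gk)_P=g_P\ \ (k\in K),
\end{equation*}
both immediate from uniqueness of $g=g_Pg_K$ applied to $g_1g_2=g_1(g_2)_P(g_2)_K$. Granting this, expanding $T(g_1)T(g_2)b(p)$ produces $b\big((g_1(g_2p)_P)_P\big)-b\big((g_1(g_2)_P)_P\big)$, and functoriality collapses the inner projections to give $b\big((g_1g_2p)_P\big)-b\big((g_1g_2)_P\big)=T(g_1g_2)b(p)$; equivalently, the extended $b$ is a genuine $1$-cocycle for $T$. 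Restricting the formula to the generators recovers the data of the statement: for $p_0\in P$ it is the cocycle identity of the original representation; for $k\in K$ it gives $T(k)b(p)=b\big((kp)_P\big)=b(p')$ with $kp=p'k'$, matching the preceding lemma; and since $\sigma\in K$ (so $\sigma_P=e$ and $b(\sigma)=0$) it gives $T(\sigma)b(p)=b\big((\sigma p)_P\big)$. Here $\hat p:=(\sigma p)_P$ is characterized intrinsically by $\hat p\hat p^*=\sigma pp^*\sigma$: writing $\sigma p=\hat p\hat k$ with $\hat k\in K$, one has $\sigma pp^*\sigma=(\sigma p)(\sigma p)^*=\hat p\hat k\hat k^*\hat p^*=\hat p\hat p^*$, with uniqueness of $\hat p$ from $P\cap K=\{e\}$. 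Because $U(2,2)$ is algebraically generated by $P$ and $\sigma$ (Proposition~1), the operators $T(g)$ furnish a representation of the whole group on $H_0^{\mathrm{alg}}$.

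It remains to analyze the operator norms. The operators $T(n)$, $n\in N$, are unitary, acting by multiplication by the unimodular functions $s\mapsto\chi(sns^*)$; the operators $T(s_0)$, $s_0\in S$, are bounded by the almost-invariance of $\nu$; and the $K$-operators extend the action supplied by the preceding lemma. I would verify by a direct change of variables exactly which of these are isometric for $\nu=|s|^{-4}\,ds$, and reconcile the coexistence of boundedness on $K$ with unboundedness of this particular $T(\sigma)$ (note that $\sigma$ itself lies in $K$), which is precisely where the delicacy sits.

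The main obstacle is to prove that $T(\sigma)$ is \emph{unbounded and does not extend to $H=L^2(S,\nu)$}. I would do this by computing the birational involution $p\mapsto\hat p=(\sigma p)_P$ of $P\cong U(2,2)/K$ --- the long Weyl element --- and reading it in the polar coordinates $s=r\omega$ in which $d\nu=r^{-1}\,dr\,d\omega$, where it acts essentially as a radial inversion interchanging the two ends $r\to 0$ and $r\to\infty$ intertwined with a transformation of the sphere. The generator $f(s)=e^{-|s|/2}$ was chosen (Theorem on the special measure) to lie in $Z_\nu\setminus L^2_\nu(S)$, failing square-integrability at the origin relative to $r^{-1}dr$, while its coboundaries $b(p)=T(p)f-f$ are square-integrable; transporting these by the radial inversion moves the borderline behaviour to the opposite end and, I expect, forces $\|T(\sigma)b(p_n)\|/\|b(p_n)\|\to\infty$ along a sequence $p_n$ escaping to the boundary. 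Establishing this divergence quantitatively --- hence the non-extendability of $T(\sigma)$ while the group relations persist on $H_0^{\mathrm{alg}}$ --- is the analytic heart of the theorem.
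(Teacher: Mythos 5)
Your algebraic construction is correct and is essentially the route the paper's own framework dictates (the paper, in fact, states this theorem without any proof, resting on Lemma~3 and Proposition~1). The global formula
$$T(g)\,b(p)=b\bigl((gp)_P\bigr)-b\bigl(g_P\bigr),\qquad g\in U(2,2),\ p\in P,$$
together with the two projection identities $(g_1g_2)_P=\bigl(g_1(g_2)_P\bigr)_P$ and $(gk)_P=g_P$, does yield a well-defined homomorphism on $H_0$ (well-definedness via the linear independence of the $b(p)$, Lemma~3(2)), recovers the original $T(p_0)$ through the cocycle identity, recovers the $K$-operators of Lemma~3 since $b(k)=b(e)=0$, and gives $T(\sigma)b(p)=b(\hat p)$ with the correct intrinsic characterization $\hat p\hat p^*=(\sigma p)(\sigma p)^*=\sigma pp^*\sigma$ and uniqueness from $P\cap K=\{e\}$. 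Your reading of the cocycle-extension formula as $b(pk)=b(p)$, $k\in K$ (the paper's ``$q\in Q$'' is evidently a misprint for $K$) is also the right one, and your proof that the extended $b$ is a genuine $1$-cocycle for the extended $T$ is correct. This is cleaner than merely checking relations among generators, since it gives the representation on all of $U(2,2)$ at once.

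The genuine gap is that the theorem asserts operator-theoretic properties --- the $P$-operators are unitary, the $K$-operators bounded, and $T(\sigma)$ unbounded and not extendable to $H=L^2(S,\nu)$ --- and your proposal does not prove any of them: it says ``I would verify\dots'' and ``I expect\dots''. The unboundedness of $T(\sigma)$ is the analytic substance of the statement; your heuristic (the involution $p\mapsto\hat p$ acts like a radial inversion in the coordinates where $d\nu=r^{-1}\,dr\,d\omega$, transporting the borderline non-$L^2$ behaviour of $f(s)=e^{-|s|/2}$ from one end to the other) is plausible, but you produce no explicit computation of $\hat p$, no sequence $p_n$, and no norm estimate forcing $\|b(\hat p_n)\|/\|b(p_n)\|\to\infty$. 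Likewise the claimed unitarity of the $P$-operators needs an argument (on $L^2(S,\nu)$ with $\nu$ only almost invariant, the translations $T(s_0)$ were earlier said merely to be bounded), and the tension you correctly flag --- $\sigma$ itself lies in $K$, yet the $K$-operators are declared bounded while $T(\sigma)$ is declared unbounded --- sits in the paper's own statement and must be resolved, not just noted, in any complete proof. Since the paper supplies no proof of these analytic claims either, there is nothing to compare them against; but as a standalone argument your proposal establishes only the algebraic half of the theorem.
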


Applications of the described constructions to representations of the group of currents will be described elsewhere. In this paper we restrict ourselves to the group  $U(2,2)$ just for methodological reasons: our aim is to give a simple example of the general theory, which, in the authors' opinion, covers a wide class of semisimple Lie groups and the corresponding groups of currents.

Translated by N.Tsilevich.
\end{document}